\newtheorem{theorem}{Theorem}
\theoremstyle{plain}
\newtheorem{conjecture}[theorem]{Conjecture}
\newtheorem{lemma}[theorem]{Lemma}
\numberwithin{equation}{section}
\numberwithin{theorem}{section}
\numberwithin{case}{section}
\numberwithin{subcase}{case}
\def \a{\alpha}
\def \e{\epsilon}
\def \r{\gamma}
\begin{document}

\title{On vertex-disjoint paths in regular graphs}
\author{Jie Han}
\thanks{The author is supported by FAPESP (2013/03447-6, 2014/18641-5)}

\address{Instituto de Matem\'{a}tica e Estat\'{\i}stica, Universidade de S\~{a}o Paulo, \\
Rua do Mat\~{a}o 1010, 05508-090, S\~{a}o Paulo, Brazil. \\
Email: jhan@ime.usp.br}

\begin{abstract}
Let $c\in (0, 1]$ be a real number and let $n$ be a sufficiently large integer.
We prove that every $n$-vertex $c n$-regular graph $G$ contains a collection of $\lfloor 1/c \rfloor$ paths whose union covers all but at most $o(n)$ vertices of $G$.
The constant $\lfloor 1/c \rfloor$ is best possible when $1/c\notin \mathbb{N}$ and off by $1$ otherwise.
Moreover, if in addition $G$ is bipartite, then the number of paths can be reduced to $\lfloor 1/(2c) \rfloor$, which is best possible.
\end{abstract}

\maketitle

\section{Introduction}
Paths and cycles are fundamental objects in graph theory.
The \emph{path cover number} is the minimum number of vertex-disjoint paths whose union covers all vertices of $G$.
Note that we allow paths of length $0$ (single vertices) in the definition above.
Trivially the path cover number of a graph $G$ is upper bounded by the independence number of $G$, because the set of the (arbitrary one out of the two) end vertices of the paths in a minimal path cover form an independent set in $G$. 
It is evident that for general graphs, determining the path cover number is NP-hard, because deciding if the path cover number equals $1$ is equivalent to the decision problem for Hamiltonian path, which is NP-complete.
For bounds on the path cover number for general graphs, see e.g.~\cite{Hartman, Ore61}.
For regular graphs, Magnant and Martin~\cite{MaMa09} made the following conjecture and confirmed it for $0\le k\le 5$.

\begin{conjecture}\label{conj}
If $G$ be a $k$-regular graph of order $n$, then the path cover number of $G$ is at most $n/(k+1)$.
\end{conjecture}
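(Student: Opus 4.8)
Path cover number is additive over connected components, and every connected $k$-regular graph has at least $k+1$ vertices. Hence if the bound $p(G_i)\le m_i/(k+1)$ holds for each connected component $G_i$ on $m_i$ vertices, summing over $i$ gives $p(G)\le\sum_i m_i/(k+1)=n/(k+1)$. So it suffices to assume $G$ connected on $m\ge k+1$ vertices and prove $p(G)\le m/(k+1)$; note that $m/(k+1)\ge 1$, with equality exactly for $G=K_{k+1}$, the tight example. Equivalently, writing $f(G)$ for the maximum number of edges in a spanning linear forest, one has $p(G)=m-f(G)$, so the target is $f(G)\ge mk/(k+1)$.

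\textbf{Main counting step via P\'osa rotations.} Fix a minimum path cover $\mathcal P=\{P_1,\dots,P_p\}$; the goal is $m\ge p(k+1)$. Choose one endpoint $x_i$ of each $P_i$, keep the opposite endpoint fixed, and let $S_i$ be the set of vertices realizable as a moving endpoint through P\'osa rotations inside $P_i$; put $S=\bigcup_i S_i$. Minimality of $\mathcal P$ forbids any edge from $S$ to an endpoint (or rotation-endpoint) of a different path, since such an edge would merge two paths and lower $p$; and, exactly as in P\'osa's lemma, it confines the neighbours of $S_i$ along $P_i$ to $S_i$ together with the vertices immediately before and after $S_i$. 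Together these facts bound the external boundary of $S$ in terms of $|S|$ alone, while $k$-regularity forces the $k|S|$ edge-endpoints at $S$ to be absorbed by this confined boundary. Pushing the resulting inequality should force the paths to be long, i.e. $m\ge p(k+1)$, away from a degenerate regime in which the paths behave like disjoint near-cliques.

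\textbf{Tight clusters via $2$-factors.} I would treat that degenerate regime through $f(G)$ directly. When $k$ is even, Petersen's theorem yields a $2$-factor; deleting one edge per cycle produces a spanning linear forest with as many components as the $2$-factor has cycles, so $p(G)$ is at most the number of cycles, and it suffices to find a $2$-factor of average cycle length $\ge k+1$. Starting from any $2$-factor and merging two cycles by a $2$-switch whenever a crossing pair of non-factor edges permits it, I terminate at a $2$-factor in which no two cycles can be merged. A short cycle of length $\ell$ is then essentially chordless and sends about $\ell(k-2)$ edges outward; a discharging analysis of where these edges can land without creating a mergeable crossing should force either an available merge (a contradiction) or an isolated $K_{k+1}$, yielding the required average length.

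\textbf{Main obstacle and loose ends.} The crux is precisely this discharging/merging analysis: minimizing the number of cycles is strictly weaker than forbidding short cycles, so one must show that every short cycle can be absorbed into a longer one or else certify the extremal $K_{k+1}$ locally. Here the cases $k\le 5$ settled by Magnant and Martin serve both as base cases and as a sanity check on the discharging weights. Two further points need care. First, odd $k$ and graphs with bridges need not admit a $2$-factor, so there I would replace it by a maximum subgraph of maximum degree at most $2$ and bound its deficiency by a Berge--Tutte argument in terms of the bridge structure, recombining deficiency plus cycle count into the bound $m/(k+1)$. Second, the cross-path bookkeeping in the rotation step genuinely goes beyond the single-path setting of classical P\'osa rotations and must be made rigorous; the same count, with bipartiteness sharpening the boundary estimate, should specialize to the improved bipartite bound recorded in the abstract.
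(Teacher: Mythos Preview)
The statement you are attempting to prove is presented in the paper as a \emph{conjecture}, not a theorem: it is the Magnant--Martin conjecture, which the paper reports as settled only for $0\le k\le 5$ and open otherwise. The paper does not prove it. What the paper proves instead (its Theorem~1.2) is an approximate version for dense regular graphs: for $G$ that is $\lceil cn\rceil$-regular on $n$ vertices with $n$ large, there exist at most $\lfloor 1/c\rfloor$ vertex-disjoint paths covering all but $\alpha n$ vertices. That proof goes through the Regularity Lemma, the Blow-up Lemma, and a reservoir/connecting argument, and has nothing in common with P\'osa rotations, $2$-factors, or discharging.

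Your proposal is honestly labeled as incomplete: you identify the discharging/merging step as the ``crux'' and ``main obstacle,'' you say the rotation inequality ``should force'' the conclusion, and you flag the odd-$k$ and bridged cases as needing a separate Berge--Tutte argument that you do not carry out. None of these gaps is minor. In particular, showing that some $2$-factor of a $k$-regular graph has at most $n/(k+1)$ cycles is essentially equivalent in strength to the conjecture itself in the even-$k$ case, and the cross-path P\'osa bookkeeping you allude to is precisely the kind of global obstruction that has kept the problem open. So what you have is a plausible plan of attack, not a proof; and since the paper contains no proof of this statement either, there is no argument in the paper to compare yours against.
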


If true, the bound in Conjecture~\ref{conj} would be tight as seen by disjoint copies of complete graph $K_{k+1}$ (if $n \equiv j$ modulo $k+1$ and $j\neq 0$, then we change one copy of $K_{k+1}$ to a copy of $K_{k+1+j}$).
By the celebrated Dirac theorem on Hamiltonian paths~\cite{Dirac}, Conjecture~\ref{conj} is true for $k\ge (n-1)/2$.
As far as we know, Conjecture~\ref{conj} is open for all other cases.
To provide more evidence on the validity of the conjecture, in this note we prove the following result for dense regular graphs.

\begin{theorem}\label{thm1}
For any $c, \a>0$, there exists $n_0\in \mathbb{N}$ such that the following holds for every integer $n\ge n_0$.
\begin{enumerate}
\item Every $\lceil c n\rceil$-regular graph of order $n$ contains a collection of at most $\lfloor 1/c \rfloor$ vertex-disjoint paths whose union covers all but $\a n$ vertices.
\item Every bipartite $\lceil c n\rceil$-regular graph of order $n$ contains a collection of at most $\lfloor 1/(2c) \rfloor$ vertex-disjoint paths whose union covers all but $\a n$ vertices.
\end{enumerate}
\end{theorem}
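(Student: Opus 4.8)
The plan is to prove the statement with Szemer\'edi's Regularity Lemma, reducing it to a combinatorial lemma about the \emph{reduced graph} of a regular graph, and then blowing paths up from the reduced graph to $G$. I describe part (1); part (2) is entirely analogous.

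\smallskip\noindent\emph{Step 1: regularity.} Choose constants $\varepsilon\ll d\ll\alpha,c$ and apply the Regularity Lemma to $G$: we get a partition $V_0\cup V_1\cup\cdots\cup V_k$ with $|V_0|\le\varepsilon n$ and $|V_1|=\cdots=|V_k|=m$, and we form the reduced graph $R$ on $[k]$, where $ij\in E(R)$ iff $(V_i,V_j)$ is $\varepsilon$-regular of density at least $d$, weighting $ij$ by $w(ij):=d(V_i,V_j)$. Since every vertex of $G$ has degree exactly $\lceil cn\rceil$, a routine count of the edges leaving a fixed cluster shows both $\delta(R)\ge(c-\eta)k$ and the \emph{fractional $ck$-regularity} $\sum_{j:\,ij\in E(R)}w(ij)\in[(c-\eta)k,\,ck]$ for every $i\in[k]$, where $\eta=\eta(\varepsilon,d)\to0$ as $\varepsilon,d\to0$. (If $G$ is bipartite, then after moving the few clusters that are not essentially one-sided into $V_0$, the graph $R$ is bipartite; moreover each component of $G$ is bipartite and $\lceil cn\rceil$-regular, hence balanced, so the two sides of $R$ have nearly equal size.)

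\smallskip\noindent\emph{Step 2: the combinatorial lemma (the crux).} I want to show that any $R$ on $k$ vertices which is fractionally $ck$-regular (edge weights in $[d,1]$, weighted degrees in $[(c-\eta)k,ck]$) admits a partition $V(R)=U_0\cup W_1\cup\cdots\cup W_q$ with $q\le\lfloor1/c\rfloor$, $|U_0|=o(k)$, and each $R[W_j]$ traceable (and $q\le\lfloor1/(2c)\rfloor$ in the balanced bipartite case). Fractional regularity is indispensable here: minimum degree $(c-o(1))k$ alone may force $\Theta(k)$ paths---an unbalanced complete bipartite graph is an example---so some extra input is needed; but an unbalanced complete bipartite graph cannot be fractionally $ck$-regular, since its two sides have equal weighted degree sums yet different sizes, and more generally the fractional condition controls how unbalanced the components can be. The reduction I have in mind is: it suffices to near-partition, for each component $C$, the set $V(C)$ into at most $\lfloor|C|/((c-\eta)k)\rfloor$ traceable pieces while discarding $o(k)$ vertices, because then $\sum_C\lfloor|C|/((c-\eta)k)\rfloor\le\lfloor\sum_C|C|/((c-\eta)k)\rfloor\le\lfloor1/(c-\eta)\rfloor=\lfloor1/c\rfloor$ by superadditivity of the floor, and likewise $\sum_C\lfloor|C|/(2(c-\eta)k)\rfloor\le\lfloor1/(2c)\rfloor$ in the bipartite case, using that balanced bipartite components have order at least $2(c-\eta)k$. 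A component of order at most $2(c-\eta)k$ has minimum degree at least half its order and is therefore traceable (similarly Moon--Moser in the balanced bipartite case); a larger component $C$ that fails to be traceable contains a separating set $S$ whose removal leaves components $D_1,\dots,D_r$ with $r\ge|S|+2$ and $|D_a|\ge(c-\eta)k-|S|$, and one processes it by discarding a small $S$ into $U_0$ and recursing on the $D_a$, the fractional balance keeping all pieces of order $\ge(c-o(1))k$ so that their number is as claimed.

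\smallskip\noindent\emph{Step 3: blow-up.} For each $W_j$, fix a Hamilton path of $R[W_j]$ and walk through the corresponding clusters in that order, at each $\varepsilon$-regular pair greedily extending a path that uses all but $O(\sqrt\varepsilon)m$ of the current cluster (absorbing short connecting segments at the junctions). This yields pairwise-disjoint paths $P_1,\dots,P_q$ in $G$ with $q\le\lfloor1/c\rfloor$ that together miss at most $|V_0|+|U_0|m+O(\sqrt\varepsilon)\,km\le\alpha n$ vertices; the bipartite case is identical with $\lfloor1/(2c)\rfloor$ in place of $\lfloor1/c\rfloor$.

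\smallskip\noindent\emph{Main obstacle.} The substance is entirely in Step 2, and in particular in the ``large component'' analysis: one must show that a fractionally $ck$-regular, connected, non-traceable graph can be cut into the right number of traceable pieces while discarding only $o(k)$ vertices. The Gallai--Edmonds/toughness structure of such a graph has to be combined with the fractional degree conditions to keep all pieces of order $\ge(c-o(1))k$; getting the constant to be exactly $\lfloor1/c\rfloor$ (and $\lfloor1/(2c)\rfloor$ in the bipartite case), rather than a weaker function of $c$, is where the argument must be run carefully.
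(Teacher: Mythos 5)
There is a genuine gap, and it is exactly where you flag it: Step 2 is not a lemma you have proved, it is the whole difficulty of the problem pushed into the reduced graph, and the sketch you give for it does not work. First, non-traceability of a component $C$ does \emph{not} imply the existence of a separating set $S$ with at least $|S|+2$ components in $C-S$; that condition is necessary for traceability but far from sufficient, so its failure is not guaranteed for a non-traceable $C$. Second, even when such an $S$ exists, nothing forces it to be small: it can have size $\Theta(k)$, so it cannot be ``discarded into $U_0$'' within the $o(k)$ budget. Third, after deleting $S$ the pieces $D_a$ no longer satisfy the minimum-degree or fractional-regularity hypotheses (their vertices may have lost up to $|S|$ neighbours), so the recursion has no induction hypothesis to appeal to, and the accounting that the total number of traceable pieces produced from $C$ is at most $\lfloor |C|/((c-\eta)k)\rfloor$ is never established. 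In effect, your Step 2 asks for an approximate, fractionally weighted version of the Magnant--Martin conjecture at the level of the reduced graph, with the sharp constant $\lfloor 1/c\rfloor$; that is at least as hard as the theorem itself, and no argument for it is given.

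The paper avoids this entirely, and the contrast shows why the missing step is not a routine repair. From the regularity decomposition the paper extracts only a \emph{constant} number $C$ (depending on $c,\alpha$, not equal to $\lfloor 1/c\rfloor$) of long vertex-disjoint cycles: it shows via the fractional Berge--Tutte formula that the reduced graph has a near-perfect fractional matching with weights in $\{0,1/2,1\}$ (an abundance of isolated clusters after deleting $S$ would force some vertex of $G$ to have degree exceeding $(1+\e)cn$), and then applies the Blow-up Lemma to each super-regularized pair to get a spanning cycle of that pair. The reduction from $C$ paths to $\lfloor 1/c\rfloor$ paths is then done in $G$ itself by a reservoir argument: a random set $R$ is reserved in advance, in which every vertex of $G$ has degree $(1\pm\e)c\r n$ because $G$ is exactly $\lceil cn\rceil$-regular; if $\lfloor 1/c\rfloor+1$ paths remain, their endpoints have degrees into $R$ summing to more than $|R|$, so two endpoints share a common neighbour in $R$ and the corresponding paths can be merged, and one iterates. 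That pigeonhole step is precisely where the exact regularity of $G$ produces the constant $\lfloor 1/c\rfloor$ (and $\lfloor 1/(2c)\rfloor$ in the bipartite case, connecting through $R\cap Y$); your outline has no analogue of it and instead demands the exact constant already from the cluster decomposition, which is the unproved crux.
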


Note that Part (2) of the theorem corresponds to the bipartite version of Conjecture~\ref{conj}: if $G$ is a bipartite $k$-regular graph of order $n$, then the path cover number of $G$ can be as large as $n/(2k)$, as seen by the vertex-disjoint copies of $K_{k,k}$.
Note that 
\[
\frac{-2}{c(\lceil c n\rceil+1)}\le \frac{n}{\lceil c n\rceil+1} - \frac1c = \frac{c n - \lceil c n\rceil-1}{c(\lceil c n\rceil+1)} < 0.
\]
So when $n$ is large, if $1/c\notin \mathbb N$, then $\lfloor \frac{n}{\lceil cn\rceil+1}\rfloor = \lfloor 1/c \rfloor$, i.e., the number of paths in Theorem~\ref{thm1} matches the quantity in Conjecture~\ref{conj}; however if $1/c \in \mathbb N$, then the quantity $\lfloor 1/c \rfloor$ is off by $1$.
On the other hand, the quantity $\lfloor 1/(2c) \rfloor$ in Part (2) is optimal.


At last, we remark that the bound in Conjecture~\ref{conj} is not tight if we restrict the problem on connected regular graphs, see~\cite{Reed96} for connected cubic graphs.



\section{A weaker result}

We first prove the following weaker result.
For reals $a, b, c$, we write $a=(1\pm b)c$ if there exists a real $x\in (1-b, 1+b)$ such that $a=xc$.

\begin{theorem}\label{thm2}
Given any reals $c, \a>0$, there exists $\e>0$ and integer $C>0$ such that the following holds for sufficiently large integer $n$.
Let $G$ be a graph of order $n$ such that $\deg(v)=(1\pm\e)c n$ for every $v\in V(G)$.
Then there exists a collection of $C$ vertex-disjoint cycles in $G$ whose union covers all but $\a n$ vertices of $G$.
\end{theorem}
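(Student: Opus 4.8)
The plan is to apply Szemer\'edi's regularity lemma to $G$, transfer the problem to the reduced graph, cover almost all of the reduced graph by a bounded number of cycles, and then blow these cycles up. Fix constants $0<\epsilon'\ll d\ll c,\alpha$ with $\epsilon'$ also small in terms of $d$, and apply the regularity lemma with parameter $\epsilon'$ to get a partition $V(G)=V_0\cup V_1\cup\cdots\cup V_k$ with $|V_0|\le \epsilon' n$, equal cluster sizes $|V_1|=\cdots=|V_k|=L$, all but at most $\epsilon' k^2$ of the pairs $(V_i,V_j)$ being $\epsilon'$-regular, and $k$ bounded in terms of $\epsilon'$. Let $R$ be the reduced graph on $[k]$ with $ij\in E(R)$ iff $(V_i,V_j)$ is $\epsilon'$-regular of density at least $d$.

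First I would record the properties of $R$. Since every vertex of $G$ has degree $(1\pm\epsilon)cn$, discarding the edges incident to $V_0$, to irregular pairs, and to pairs of density $<d$ shows that $R$ has minimum degree at least $(c-d-3\epsilon')k\ge (c/2)k$. Crucially I would also use the \emph{two-sided} degree bound to get a ``no concentration'' property: for each $i$ the densities $d(V_i,V_j)$ over $j\sim_R i$ sum to $(1\pm o(1))ck$, i.e.\ $R$ carries an edge-weighting with values in $[d,1]$ all of whose weighted degrees are $(1\pm o(1))ck$. A short double count then gives that $R$ has independence number at most $(1/2+o(1))k$, and more generally no severely unbalanced sparse cut — exactly the configurations (e.g.\ an almost complete bipartite reduced graph with very unequal sides) that would obstruct a bounded cycle cover. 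This is the step where the full near-regularity of $G$, not merely a minimum-degree hypothesis, is essential: a graph of minimum degree $\delta m$ alone may admit no cycle family covering more than a $2\delta$-fraction of its vertices, as $K_{\delta m,(1-\delta)m}$ shows.

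The combinatorial heart of the argument, which I expect to be the main obstacle, is the lemma that there is $C=C(c,d,\alpha)$ such that every $R$ with the above properties has at most $C$ vertex-disjoint cycles covering all but at most $(\alpha/2)k$ of its vertices. I would prove this by taking a family of at most $C$ vertex-disjoint cycles covering the maximum number of vertices and analysing the uncovered set $U$: if $|U|>(\alpha/2)k$, then by the minimum degree and the no-concentration property $U$ is not independent and in fact $R[U]$ is not a forest (a vertex of $U$ has $\ge (c/2)k-|U|\ge 1$ neighbours inside $U$ once $|U|$ is linear), so $R[U]$ contains a cycle; one then either adjoins it, or uses a short connector through a common neighbourhood to \emph{merge} it into one of the existing cycles so the count does not grow — in either case contradicting maximality or the bound $C$. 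Keeping the number of cycles bounded throughout is the delicate point.

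Finally I would blow up. The cycles produced in $R$ are vertex-disjoint, hence correspond to disjoint cluster families. For a cycle $i_1 i_2\cdots i_t i_1$ of $R$, each consecutive pair $(V_{i_s},V_{i_{s+1}})$ is $\epsilon'$-regular of density $\ge d\gg\epsilon'$, and a standard embedding lemma yields a single cycle of $G$ running cyclically through $V_{i_1},\dots,V_{i_t}$ and covering all but an $f(\epsilon',d)$-fraction of each such cluster. The union of the resulting at most $C$ cycles of $G$ misses only the vertices of $V_0$ (at most $\epsilon' n$), the vertices in clusters not used by the reduced cover (at most $(\alpha/2)k\cdot L\le (\alpha/2)n$), and an $f(\epsilon',d)$-fraction of the remaining clusters (at most $f(\epsilon',d)\,n$). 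Choosing $d$ and $\epsilon'$ small enough in terms of $c$ and $\alpha$ makes this at most $\alpha n$, completing the proof with $C=C(c,\alpha)$ and $\epsilon$ fixed along the way.
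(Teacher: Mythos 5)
Your overall frame (regularity lemma, using the \emph{two-sided} degree condition to rule out unbalanced bipartite-type reduced graphs, blow-up at the end) is sound, but the combinatorial heart of your argument --- the lemma that the reduced graph $R$ contains vertex-disjoint cycles covering all but $(\alpha/2)k$ of its vertices --- is precisely the step you leave unproven, and the sketch you give for it does not work. If you take a maximum family of vertex-disjoint cycles and let $U$ be the uncovered set with $|U|>(\alpha/2)k$, nothing forces $R[U]$ to contain a cycle: a vertex of $U$ has at least $(c/2)k$ neighbours in $R$, but up to $k-|U|$ of them may lie on the existing cycles, so it is only guaranteed $(c/2)k-(k-|U|)$ neighbours \emph{inside} $U$, which is negative whenever $|U|<(1-c/2)k$; your count ``$\ge (c/2)k-|U|\ge 1$'' subtracts the wrong quantity. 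In particular $U$ may well be independent, since $(\alpha/2)k$ is far below your independence-number bound $(1/2+o(1))k$, so neither ``$U$ is not independent'' nor ``$R[U]$ is not a forest'' follows. The subsequent merging of a new cycle into an old one through a common neighbourhood, needed to keep the count under control, is also not carried out (you yourself flag it as the delicate point). Note moreover that in the application $k$ is bounded by the regularity constant $M(\epsilon')$, so bounding the \emph{number} of cycles is automatic; the genuine issue, which remains open in your write-up, is whether an almost-spanning family of vertex-disjoint cycles in $R$ exists at all.

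The paper sidesteps this difficulty entirely: it never looks for cycles in the reduced graph. Using the two-sided degree condition, a counting argument shows that for every $S$ the number of isolated vertices of $H-S$ exceeds $|S|$ by at most $\beta t$ (otherwise some vertex of $G$ would have degree above $(1+\epsilon)cn$), and then the fractional Berge--Tutte formula gives a fractional matching of $H$ of weight at least $(1-\beta)t/2$, which can be taken half-integral. Splitting each cluster into two halves according to this matching partitions almost all of $V(G)$ into pairs of half-clusters, each an $\epsilon$-regular pair of density at least $d$; after removing at most $\epsilon m$ atypical vertices from each side such a pair is super-regular, and the Blow-up Lemma yields one spanning cycle per pair, at most $t\le M$ cycles in total. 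Thus a single \emph{edge} of the reduced graph already produces a cycle of $G$, and no cycle structure in $R$ is required. To repair your route you would either have to actually prove your cycle-cover lemma for weighted near-regular reduced graphs (nontrivial), or replace ``cycles of $R$'' by ``edges of a near-perfect half-integral matching of $R$'' --- which is exactly the paper's proof.
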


Our main tools for embedding the cycles are the Regularity Lemma of Szemer\'edi~\cite{Sze} and the Blow-up Lemma of Koml\'os et al.~\cite{Blowup}.
For any two disjoint vertex-sets $A$ and $B$ of a graph $G$, the density of $A$ and $B$ is defined as $d(A, B):=e(A, B)/(|A||B|)$, where $e(A, B)$ is the number of edges with one end vertex in $A$ and the other in $B$.
Let $\e$ and $\delta$ be two positive real numbers.
The pair $(A, B)$ is called \emph{$\e$-regular} if for every $X\subseteq A$ and $Y\subseteq B$ satisfying $|X|>\e |A|$, $|Y|>\e |B|$, we have $|d(X, Y) - d(A, B)|<\e$.
Moreover, the pair $(A, B)$ is called \emph{$(\e, \delta)$-super-regular} if $(A, B)$ is $\e$-regular and $\deg_B(a) > \delta |B|$ for all $a\in A$ and $\deg_A(b) > \delta |A|$ for all $b\in B$.

\begin{lemma}[Regularity Lemma -- Degree Form]\label{RL}
For every $\e>0$ there is an $M=M(\e)$ such that for any graph $G=(V, E)$ and any real number $d\in [0,1]$, there is a partition of the vertex set $V$ into $t+1$ clusters $V_0, V_1,\dots, V_t$, and there is a subgraph $G'$ of $G$ with the following properties:
\begin{itemize}
\item $t\le M$,
\item $|V_i|\le \e |V|$ for $0\le i\le t$ and $|V_1|=|V_2|=\cdots=|V_t|$,
\item $\deg_{G'}(v) > \deg_G (v) - (d+\e) |V|$ for all $v\in V$,
\item $G'[V_i]=\emptyset$ for all $i$,
\item each pair $(V_i, V_j)$, $1\le i< j\le t$, is $\e$-regular with $d(V_i, V_j)=0$ or $d(V_i, V_j)\ge d$ in $G'$.
\end{itemize}
\end{lemma}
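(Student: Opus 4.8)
The plan is to deduce the degree form from the original (partition) version of Szemer\'edi's Regularity Lemma~\cite{Sze}, which for every $\e'>0$ and every $m_0$ produces an integer $M'=M'(\e',m_0)$ such that any graph on $n$ vertices has a partition $V_0,V_1,\dots,V_t$ with $m_0\le t\le M'$, $|V_0|\le \e' n$, $|V_1|=\cdots=|V_t|=L$, in which all but at most $\e'\binom{t}{2}$ of the pairs $(V_i,V_j)$ are $\e'$-regular. I would fix an auxiliary parameter $\e'\ll \e$ (with $\e'\le\e/2$), take $m_0\ge 1/\e$, apply this to $G$, and then build $G'$ by deletion rather than by any new construction, setting $M:=M'(\e',\lceil 1/\e\rceil)$.

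Concretely, I would delete from $G$ every edge that is (i) incident to $V_0$, (ii) contained in a single cluster $V_i$, (iii) contained in an $\e'$-irregular pair, or (iv) contained in a pair of density less than $d$; all remaining edges lie in $\e'$-regular pairs of density at least $d$, which we keep in full. With this $G'$ the structural conclusions are immediate: $G'[V_i]=\emptyset$ by (ii); every surviving pair either has density $0$ (if all its edges were deleted) or inherits its original density $\ge d$; a density-$0$ pair is trivially $\e$-regular, and a surviving dense pair is $\e'$-regular hence $\e$-regular since $\e'\le\e$. Thus the entire content of the lemma is the per-vertex degree estimate $\deg_{G'}(v)>\deg_G(v)-(d+\e)|V|$.

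For the degree bound I would estimate, for each $v$ in a cluster $V_i$, the four contributions to its loss. Deletions of types (i) and (ii) cost $v$ at most $|V_0|+L\le 2\e' n$. The heart of the matter is type (iv): here I would use the defining property of $\e'$-regularity, namely that for a regular pair $(V_i,V_j)$ of density $\rho$ all but at most $\e' L$ vertices of $V_i$ send at most $(\rho+\e')L$ edges into $V_j$ (otherwise the set of high-degree vertices would be a set $X$ with $|X|>\e' L$ and $d(X,V_j)>\rho+\e'$, contradicting regularity). Summing the bound $(\rho+\e')L<(d+\e')L$ over the at most $t$ sparse clusters gives a loss of at most $(d+\e')n$ for every vertex that is not \emph{exceptional} (degree exceeding $(\rho+\e')L$) for some sparse pair. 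Type (iii) I would handle by first moving into $V_0$ the at most $\sqrt{\e'}\,t$ clusters that meet more than $\sqrt{\e'}\,t$ irregular pairs (a Markov bound, since there are at most $\e'\binom{t}{2}$ irregular pairs in total); after this every surviving cluster meets at most $\sqrt{\e'}\,t$ irregular pairs, so the type-(iii) loss is at most $\sqrt{\e'}\,tL\le\sqrt{\e'}\,n$.

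The main obstacle is precisely that regularity gives \emph{no} control over a single vertex, so an exceptional vertex can be adjacent to all of a low-density cluster and lose far more than $dn$ to type-(iv) deletions. I would dispose of these by averaging: the total type-(iv) loss coming from exceptional incidences is at most $\sum_{\text{sparse }(i,j)}\e' L\cdot L\le \e' n^2/2$, so at most $\sqrt{\e'}\,n$ vertices incur exceptional loss exceeding $\sqrt{\e'}\,n$, and I would move this handful of vertices into $V_0$ as well. After these moves every remaining vertex loses at most $2\e' n+\sqrt{\e'}\,n+(d+\e')n+\sqrt{\e'}\,n<(d+\e)n$, as required, while the number of vertices absorbed into $V_0$ is $\e' n+\sqrt{\e'}\,n+\sqrt{\e'}\,n$ plus a standard $o(n)$ correction to re-equalise the cluster sizes, all of which is below $\e n$ once $\e'$ is small enough. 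The only remaining care is that removing these $O(\sqrt{\e'}\,n)$ vertices from the clusters preserves regularity and the density threshold; this is the standard robustness (``slicing'') property of regular pairs, ensured by applying the partition form with a density threshold slightly above $d$ and with $\e'$ chosen small relative to $\e$ and $d$.
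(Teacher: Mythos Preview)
The paper does not give a proof of this lemma at all; it is simply quoted as the standard degree form of Szemer\'edi's Regularity Lemma and used as a black box in the proof of Theorem~\ref{thm2}. So there is nothing in the paper to compare your argument against. Your derivation from the original partition form is the textbook one and is essentially sound, but there is one genuine slip worth flagging.

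You delete \emph{every} edge incident to $V_0$ (your deletion type (i)), and you then verify the degree inequality only for vertices $v$ lying in a cluster $V_i$ with $i\ge 1$. But the lemma, as stated here, demands $\deg_{G'}(v)>\deg_G(v)-(d+\e)|V|$ for \emph{all} $v\in V$, including $v\in V_0$. Under your construction every $v\in V_0$ has $\deg_{G'}(v)=0$, so the inequality fails for any $v\in V_0$ with $\deg_G(v)\ge (d+\e)n$; and you yourself move into $V_0$ vertices with no upper bound on their $G$-degree (entire ``bad'' clusters, plus the exceptional high-degree vertices from your Markov argument). The statement only requires $G'[V_i]=\emptyset$ for each $i$, not that $V_0$ be isolated in $G'$, so the fix is simply to keep the edges between $V_0$ and the clusters $V_1,\dots,V_t$; then any $v$ in the final $V_0$ loses at most $|V_0|<\e n$ edges and the degree bound for it is immediate. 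With that one correction your sketch goes through.
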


The Blow-up Lemma allows us to regard a super regular pair as a complete bipartite graph when embedding a graph with bounded degree.
Since we will always use it to embed a cycle, we state it in the following special form.

\begin{lemma}\label{BL}
For every $\delta>0$, there exists an $\e>0$ such that the following holds for sufficiently large integer $N$.
Let $(X, Y)$ be an $(\e, \delta)$-super-regular pair with $|X|=|Y|=N$.
Then $(X, Y)$ contains a spanning cycle (a cycle of length $2N$).
\end{lemma}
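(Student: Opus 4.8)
The statement is exactly the Hamilton-cycle instance of the general Blow-up Lemma, so one legitimate option is to quote \cite{Blowup} directly: the cycle $C_{2N}$ has maximum degree $2$ and is a spanning subgraph of $K_{N,N}$, hence it embeds into any super-regular blow-up of $K_{N,N}$. Since a genuine proof is wanted, I would instead give a self-contained argument via the absorption method, using only elementary consequences of super-regularity. Throughout I would fix $\delta>0$, choose constants $\e\ll\eta\ll\gamma\ll\delta$ with $N$ large, and write $d=d(X,Y)\ge\delta$. The three facts I plan to use repeatedly are: (i) every vertex has more than $\delta N$ neighbours across the pair; (ii) for all $S\subseteq X$, $T\subseteq Y$ with $|S|,|T|>\e N$ one has $e(S,T)\ge(\delta-\e)|S||T|$, so large sets always span edges; and (iii) for fixed $S$ with $|S|>\e N$ all but at most $\e N$ vertices $w$ on the other side satisfy $|N(w)\cap S|=(d\pm\e)|S|$, which lets me estimate common neighbourhoods and conclude that most pairs of vertices on one side have about $d^2N$ common neighbours on the other.

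\textbf{Step 1 (Absorbers).} For a pair $(x,y)$ with $x\in X$, $y\in Y$, I would call a quadruple $v_1,v_2\in Y$, $u_1,u_2\in X$ an \emph{absorber} for $(x,y)$ if $v_1u_1,u_1v_2,v_2u_2$ are edges (so that $v_1u_1v_2u_2$ is a path with ends $v_1,u_2$) and in addition $v_1x,xv_2,u_1y,yu_2$ are edges. The point is that $v_1\,x\,v_2\,u_1\,y\,u_2$ is then a path with the \emph{same} ends $v_1,u_2$ which also passes through $x$ and $y$; so whenever the short path $v_1u_1v_2u_2$ lies inside a cycle it can be rerouted to swallow the balanced pair $(x,y)$ without moving its endpoints. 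Using (i)--(iii) I would show each pair has at least $\beta N^{4}$ absorbers for some $\beta=\beta(\delta)>0$: pick $v_1,v_2\in N(x)$ (about $(\delta N)^2$ ways), then $u_1\in N(y)\cap N(v_1)\cap N(v_2)$ and $u_2\in N(y)\cap N(v_2)$, each a positive fraction of $N$ by the common-neighbourhood estimate.

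\textbf{Step 2 (Absorbing path).} By a standard random-selection argument (retain each potential absorber independently with probability $p=\Theta(\gamma N^{-3})$, then delete the few that overlap or sit on an over-used vertex) I would extract a family $\A$ of vertex-disjoint absorbers with $|\A|\le\gamma N$ such that every pair $(x,y)$ is absorbed by at least $\eta N$ members of $\A$. Concatenating the off-state paths $v_1u_1v_2u_2$ over $\A$, joined by short connecting paths that exist for almost all endpoint pairs by (ii), yields a single \emph{absorbing path} $P_{\mathrm{abs}}$ on a balanced set of at most $\mu N$ vertices. Its defining property is that for every balanced $W$ disjoint from $P_{\mathrm{abs}}$ with $|W|\le\eta N$ I can pair $W$ up arbitrarily and greedily assign each pair a private absorber from $\A$ (possible since each pair has $\ge\eta N$ choices while there are $\le\eta N/2$ pairs), thereby obtaining a path on $V(P_{\mathrm{abs}})\cup W$ with the same ends as $P_{\mathrm{abs}}$.

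\textbf{Step 3 (Almost-cover, close up, and the main obstacle).} On $R=V\setminus V(P_{\mathrm{abs}})$, which is balanced and still $(\e',\delta/2)$-super-regular, I would find a path $Q$ covering all but a balanced set $W$ with $|W|\le\eta N$, then connect the two ends of $Q$ to the two ends of $P_{\mathrm{abs}}$ (via (i)--(ii), after reserving a tiny connecting set at the start) to form one cycle through $V\setminus W$, and finally absorb $W$ using Step 2 to obtain the spanning cycle. The routine part is Steps 1--2; the crux is producing $Q$. The plan is to take a longest path in $R$ and run the P\'osa rotation--extension technique, the key input being that property (ii) forces every set of size between $\e N$ and $(1-\e)N$ to expand, which is precisely P\'osa's expansion hypothesis: a longest path that missed more than $\eta N$ vertices would have a large set of rotation-endpoints and so could be extended or closed, a contradiction. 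I expect the hard part to be exactly this step---verifying that the expansion survives on the residual set, that the leftover stays genuinely $o(N)$ \emph{and balanced}, and that the four endpoints can be joined simultaneously---since this is the point at which one is really re-proving the relevant fragment of the Blow-up Lemma by hand.
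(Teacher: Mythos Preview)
The paper does not give a proof of this lemma at all: it is stated as the specialisation of the Blow-up Lemma of Koml\'os, S\'ark\"ozy and Szemer\'edi to the target graph $C_{2N}\subseteq K_{N,N}$, and is used as a black box with a citation to~\cite{Blowup}. Your opening sentence already reproduces exactly the paper's ``proof''; everything that follows is additional work the paper does not attempt.

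As for the self-contained absorption sketch, the overall architecture is sound and standard for this type of statement, and your Step~1 counting and Step~2 random selection are routine. A couple of points would need tightening if you actually carried it out. First, the reserved connecting set you invoke in Step~3 has to be set aside \emph{before} building $P_{\mathrm{abs}}$ and $Q$, not afterwards, so that every vertex is guaranteed enough neighbours inside it; you mention it only parenthetically and out of order. Second, the leftover $W$ is balanced only up to one vertex (a path in a bipartite graph uses either equal numbers from each side or one more from one side), so you must arrange the endpoints of $Q$ and the connectors so that the resulting cycle through $V\setminus W$ really leaves a balanced $W$; this is easy but must be said. Finally, the bipartite P\'osa rotation you appeal to yields $|N(S)|\le 2|S|+1$, which combined with the expansion $|N(S)|\ge |Y_R|-\e' N$ only gives about $N/2$ endpoints on each end; one then closes to a cycle and extends, rather than getting an almost-spanning path directly. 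You flag Step~3 as the crux, and indeed fleshing it out amounts to redoing a nontrivial fragment of~\cite{Blowup}; for the purposes of this paper the citation is the intended and sufficient argument.
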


A \emph{fractional matching} is a function $f$ that assigns to each edge of a graph a real number in $[0,1]$ so that, for each vertex $v$, we have $\sum f(e)\le 1$ where the sum is taken over all edges incident to $v$.
The \emph{fractional matching number $\mu_f(G)$} of a graph $G$ is the supremum of $\sum_{e\in E(G)} f(e)$ over all fractional matchings $f$.
We use the following so-called `{fractional Berge-Tutte formula}' of Scheinerman and Ullman~\cite[Theorem 2.2.6]{ScUl97}. 
Note that it is also proved in~\cite{ScUl97} that (see Theorem~2.1.5) in a graph $G$, the maximum fractional matching, i.e., with weight $\mu_f(G)$, can be achieved with weights only chosen from $\{0, 1/2, 1\}$.

\begin{theorem}\cite{ScUl97}\label{thm:2mat}
For any graph $G$, 
\[
\mu_f(G) = \frac12 \left( |V(G)| - \max\{i(G-S) - |S|\} \right),
\]
where $i(X)$ denotes the number of isolated vertices in $G[X]$, and the maximum is taken over all $S\subseteq V(G)$. 
\end{theorem}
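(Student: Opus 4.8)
The plan is to derive the formula by proving the two inequalities separately: the $\le$ direction by a direct counting argument, and the $\ge$ direction by reducing fractional matchings in $G$ to \emph{integral} matchings in a bipartite auxiliary graph and then invoking the defect form of K\"onig's theorem. The one genuinely non-formal point, which I would isolate as a short lemma, is reconciling the Hall-type deficiency that comes out of the bipartite reduction with the Tutte-type deficiency $\max_S(i(G-S)-|S|)$ appearing in the statement.

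First I would dispose of the elementary direction $\mu_f(G)\le \tfrac12(|V(G)|-\max_S(i(G-S)-|S|))$. Fix any fractional matching $f$ and any $S\subseteq V(G)$, write $I$ for the set of isolated vertices of $G-S$ and $R=V(G)\setminus(S\cup I)$, and set $\deg_f(v)=\sum_{e\ni v}f(e)\le 1$. Since every edge meeting $I$ has its other endpoint in $S$, summing $f$ over the $I$--$S$ edges gives $\sum_{v\in I}\deg_f(v)=\sum_{e\colon I\text{-}S}f(e)\le\sum_{v\in S}\deg_f(v)\le|S|$. Adding the contributions of $I$, $S$, $R$ to $2\sum_e f(e)=\sum_v\deg_f(v)$ then yields $2\sum_e f(e)\le |S|+|S|+|R|=|V(G)|+|S|-|I|$, so $\sum_e f(e)\le\tfrac12(|V(G)|-(i(G-S)-|S|))$. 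As this holds for every $S$, choosing the extremal $S$ gives the upper bound.

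For the reverse inequality I would pass to the bipartite double cover $B$, with classes $\{v_1:v\in V(G)\}$ and $\{v_2:v\in V(G)\}$ and an edge $u_1v_2$ for each ordered pair $(u,v)$ with $uv\in E(G)$. Given a maximum matching $N$ of $B$, the assignment $f(uv)=\tfrac12(\mathbf 1[u_1v_2\in N]+\mathbf 1[v_1u_2\in N])$ is a fractional matching of $G$ of weight $\tfrac12|N|$ (each vertex receives weight at most $\tfrac12(\deg_N(v_1)+\deg_N(v_2))\le 1$), so $\mu_f(G)\ge\tfrac12\nu(B)$, where $\nu$ is the ordinary matching number. The defect form of K\"onig's theorem, applied to $B$ whose first class has size $|V(G)|$, gives $\nu(B)=|V(G)|-\max_{A\subseteq V(G)}(|A|-|N_G(A)|)$, because the $B$-neighbourhood of $\{a_1:a\in A\}$ has exactly $|N_G(A)|$ vertices.

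It then remains to compare the Hall-type deficiency $\max_A(|A|-|N_G(A)|)$ with the Tutte-type deficiency $\max_S(i(G-S)-|S|)$, and this is the step I expect to require care. The plan is: given any $A\subseteq V(G)$, take $S:=N_G(A)\setminus A$. Every vertex of $A\setminus N_G(A)$ has no neighbour in $A$ and all of its neighbours in $N_G(A)$, hence all of them in $S$, so it is isolated in $G-S$; this gives $i(G-S)\ge|A|-|A\cap N_G(A)|$, while $|S|=|N_G(A)|-|A\cap N_G(A)|$, and subtracting yields $i(G-S)-|S|\ge|A|-|N_G(A)|$. Therefore $\max_S(i(G-S)-|S|)\ge\max_A(|A|-|N_G(A)|)$, and combining with the previous paragraph gives $\mu_f(G)\ge\tfrac12(|V(G)|-\max_S(i(G-S)-|S|))$, which together with the upper bound proves equality. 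The main obstacle is precisely getting this last correspondence right: the naive choice $S=N_G(A)$ fails whenever $A$ contains edges, and the point of taking $S=N_G(A)\setminus A$ is exactly to absorb the vertices of $A$ that lie in their own neighbourhood, so that the independent-set case and the general case are handled uniformly.
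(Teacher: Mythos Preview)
The paper does not prove this statement at all: Theorem~\ref{thm:2mat} is quoted from Scheinerman and Ullman~\cite{ScUl97} with a citation and used as a black box in the proof of Theorem~\ref{thm2}. So there is no proof in the paper to compare against.

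That said, your argument is correct. The upper bound is the standard weak-duality count, and your lower bound via the bipartite double cover is clean: the reduction $\mu_f(G)\ge\tfrac12\nu(B)$ is valid, the defect form of K\"onig gives $\nu(B)=|V(G)|-\max_A(|A|-|N_G(A)|)$, and your choice $S=N_G(A)\setminus A$ correctly converts the Hall deficiency into the Tutte-type deficiency $i(G-S)-|S|$. The subtle point you flagged---that vertices of $A$ lying in $N_G(A)$ must be excluded from $S$---is exactly what makes the inequality $i(G-S)-|S|\ge|A|-|N_G(A)|$ go through, since the common term $|A\cap N_G(A)|$ cancels on both sides. For comparison, the treatment in Scheinerman--Ullman is organised around LP duality and the half-integrality of the fractional matching polytope (which the paper also mentions just before stating the theorem), rather than the double-cover reduction; your route is a legitimate alternative that trades the LP machinery for the bipartite K\"onig theorem.
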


\begin{proof}[Proof of Theorem~\ref{thm2}]
Given $c, \a>0$, let $d=\a c/9$.
We apply Lemma~\ref{BL} with $\delta=d/2$ and obtain $\e_1>0$.
Let $\e=\min\{\e_1, d/6, 3d/(2c)\}$.
We then apply Lemma~\ref{RL} with $\e$ and obtain $M=M(\e)$. Let $n\in \mathbb{N}$ be sufficiently large.
Let $G=(V, E)$ be a graph of order $n$ such that $\deg(v)=(1\pm\e)c n$ for every $v\in V$.
We apply the Regularity Lemma (Lemma~\ref{RL}) on $G$ with the constants $\e, d$ chosen as above and obtain a partition of $V$ into $V_0, V_1, \dots, V_t$ for some $t\le M$, and a subgraph $G'$ of $G$ with the properties as described in Lemma~\ref{RL}.
By moving at most one vertex from each $V_i$, $i\in [t]$ to $V_0$, we can assume that $m:=|V_i|$ is even.
Thus we have $|V_0|\le \e n + t\le 2\e n$.
Now for any $v\in V$,
\[
\deg_{G'}(v) - |V_0| > \deg_G(v) - (d+\e)n - 2\e n \ge (c - 2d)n.
\]

Let $\beta = 3d/c$.
Let $H$ be the graph on $[t]$ such that $ij\in E(H)$ if and only if $d(V_i, V_j) \ge d$.
We first assume that there exists a set $S\subseteq [t]$, such that $i(H-S) - |S| \ge \beta t$.
In particular, let $T$ be the collection of $|S|+\beta t$ isolated vertices in $H-S$.
Thus we have 
\[
e_G(T, S) \ge |T|m (\deg_{G'}(v) - |V_0|) \ge |T| m (c-2d)n.
\]
However, by averaging, this implies that there exists a vertex $v\in V$ such that
\[
\deg_G(v) \ge \deg_{G'}(v)\ge \frac{|T| m (c-2d)n}{|S| m} \ge \frac{t}{t - \beta t} (c-2d)n > (1+\e)c n,
\]
by the definition of $\beta$ and $\e$, a contradiction.
Thus we have $i(H-S) - |S| \le \beta t$ for any $S\subseteq [t]$.
So by Theorem~\ref{thm:2mat}, we get $\mu_f(H)\ge (1-\beta) t/2$.
Moreover, there exists a fractional matching $f$ such that $\sum_{e\in E(H)} f(e) = \mu_f(H)\ge (1-\beta) t/2$ and $f(e)\in \{0, 1/2, 1\}$ for every edge $e\in E(H)$.

For each $i\in [t]$ we arbitrarily split $V_i$ into $V_i^1$ and $V_i^2$ each of size $m/2$.
Thus the existence of $f$ implies that we can partition $V\setminus V_0$ into at least $(1-\beta) t/2$ pairs of sets each of form $(V_i^a, V_j^b)$ with density at least $d$, where $i, j\in [t]$, $i\neq j$ and $a, b\in [2]$, and a set of at most $2\beta t\cdot m$ vertices.
Note that here (to simplify the argument) even if an edge $i j \in E(H)$ receives weight $1$, we still split it, e.g., as $(V_i^1, V_j^1)$ and $(V_i^2, V_j^2)$.
Thus every vertex of $H$ is in at most two pairs so there are at most $t$ pairs. 

We will show that each such pair contains a cycle that covers all but at most $2\e m$ vertices.
Indeed, fix any pair $(V_i^a, V_j^b)$, let $A$ be the set of vertices in $V_i^a$ whose degree to $V_j^b$ is less than $(d-\e)|V_j^b|$.
Since $d(A, V_j^b) < d-\e$ and $|V_j^b|>\e m$, the regularity of $(V_i, V_j)$ implies that $|A|\le \e m $.
Similarly let $B$ be the set of vertices in $V_j^b$ whose degree to $V_i^a$ is less than $(d-\e)|V_i^a|$ and we have $|B|\le \e m$.
Let $A'\supseteq A$ and $B'\supseteq B$ be arbitrary subsets of $V_i^a$ and $V_j^b$, respectively, of size exactly $\e m$.
Now let $X=V_i^a\setminus A'$ and $Y=V_j^b\setminus B'$, we get that $(X, Y)$ is $(\e, d-3\e)$-super-regular with density at least $d-3\e$, and $|X|=|Y|=m-\e m$.
Since $d-3\e\ge d/2$, by Lemma~\ref{BL}, $(X, Y)$ contains a spanning cycle and we are done.

Let $C=M$.
Thus we obtain a set of at most $t\le M=C$ vertex-disjoint cycles in $G$ that covers all but at most
\[
t\cdot 2\e m+ |V_0| + 2\beta t \cdot m\le 3\beta n  = \a n
\]
vertices, completing the proof.
\end{proof}

\section{Proof of Theorem~\ref{thm1}}
In the proof of Theorem~\ref{thm1} we use the trick of a `reservoir lemma' from~\cite{RRS06, RRS08}.
Roughly speaking, we will reserve a random set $R$ of vertices at the beginning of the proof, and use them to connect the paths returned by applying Theorem~\ref{thm2} on $G-R$.
We first recall the following Chernoff's bounds (see, e.g.,~\cite{AlSp}) for binomial random variables and for $x>0$:
\begin{align*}
&\mathbb P[Bin(n', \zeta)\ge n'\zeta + x]< e^{-x^2/(2 n'\zeta + x/3)} \\
&\mathbb P[Bin(n', \zeta)\le n'\zeta - x]< e^{-x^2/(2 n'\zeta)}. 
\end{align*}

\begin{lemma}\label{lemR}
Given any $c, \r, \e>0$, the following holds for sufficiently large integer $n$.
Let $G$ be a $\lceil c n\rceil$-regular graph of order $n$.
Then there exists a set $R\subseteq V(G)$ such that $|R|=(1\pm\e)\r n$ and every vertex of $G$ has degree $(1\pm\e)c \r n$ in $R$.
\end{lemma}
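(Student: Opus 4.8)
The plan is a routine first-moment-plus-concentration argument, so I would construct $R$ by independent random sampling and then control everything with the Chernoff bounds already stated. We may assume $\r\le 1$, as this is the only range relevant to the application. Let $R$ be obtained by placing each vertex of $G$ into $R$ independently with probability $\r$. Then $|R|\sim \mathrm{Bin}(n,\r)$, and for each fixed $v\in V(G)$, since $v$ has exactly $\lceil cn\rceil$ neighbours and each of them lies in $R$ independently with probability $\r$, we have $\deg_R(v):=|N_G(v)\cap R|\sim \mathrm{Bin}(\lceil cn\rceil,\r)$, with mean $\lceil cn\rceil\r=(1+o(1))c\r n$.

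First I would control $|R|$: applying the two Chernoff inequalities with $n'=n$, $\zeta=\r$ and $x=\e\r n$ gives $\mathbb P\big[\,|R|\neq(1\pm\e)\r n\,\big]\le 2\exp(-\e^2\r n/3)=o(1)$. Next, for a fixed vertex $v$, I would observe that for $n$ large the rounding discrepancy $|\lceil cn\rceil\r-c\r n|\le\r$ is at most $\tfrac{\e}{2}c\r n$, so the bad event $\deg_R(v)\notin(1\pm\e)c\r n$ is contained in the event that $\mathrm{Bin}(\lceil cn\rceil,\r)$ deviates from its own mean by at least $\tfrac{\e}{2}c\r n$; the Chernoff bounds with $n'=\lceil cn\rceil$, $\zeta=\r$ then yield $\mathbb P\big[\deg_R(v)\neq(1\pm\e)c\r n\big]\le 2\exp(-\Omega(\e^2 c\r n))$ with an absolute implied constant. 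Finally, a union bound over the single event for $|R|$ and the $n$ events for the vertices $v$ shows that the probability that some event fails is at most $2\exp(-\e^2\r n/3)+2n\exp(-\Omega(\e^2 c\r n))$, which is below $1$ once $n$ is sufficiently large; hence there is an outcome $R$ satisfying $|R|=(1\pm\e)\r n$ and $\deg_R(v)=(1\pm\e)c\r n$ for every $v$ simultaneously, as required.

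I do not expect any genuine obstacle here, as the argument is entirely standard. The only mild point of care is reconciling the fixed regularity $\lceil cn\rceil$ with the target mean $c\r n$ in the degree estimate; this is handled by reserving half of the allowed $\e$-slack to absorb the $O(1)$ rounding error, exactly as indicated above. (Keeping the exponents uniform in $v$ — so that the union bound over $n$ vertices still closes — is automatic because every vertex has the same degree $\lceil cn\rceil$.)
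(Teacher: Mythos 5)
Your proposal is correct and follows essentially the same route as the paper: sample each vertex independently with probability $\r$, apply the stated Chernoff bounds to $|R|$ and to each $\deg(v,R)$, and union-bound over the $n+1$ events to find a good outcome. Your explicit handling of the $O(1)$ rounding gap between $\lceil cn\rceil\r$ and $c\r n$ is a minor refinement the paper treats implicitly, but it does not change the argument.
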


\begin{proof}
We select the set $R$ by including each vertex of $G$ independently and randomly with probability~$\r$.
Note that $|R|$ and $\deg(v, R)$ for each $v\in V(G)$ are both binomial random variables with expectation $\r n$ and $\r \lceil c n\rceil$, respectively.
By Chernoff's bounds, we get
\begin{align*}
&\mathbb P[|R|>(1+\e)\r n] < e^{-\e^2 \r n/3},\, \mathbb P[|R|<(1-\e)\r n] < e^{-\e^2 \r n/3}, \\
&\mathbb P[x_v>(1+\e)c \r n] < e^{-\e^2 c \r n/3}, \, \mathbb P[x_v<(1-\e)c \r n] < e^{-\e^2 c \r n/3},
\end{align*}
where $x_v:=\deg(v, R)$ for all $v\in V(G)$.
Since $(2n+2)e^{-\e^2 c \r n/3}<1$ because $n$ is large enough, there is a choice of $R$ with the desired properties.
\end{proof}

\begin{proof}[Proof of Theorem~\ref{thm1}]
Given $c, \a\in (0,1)$, let $\r= \a/4$.
We apply Theorem~\ref{thm2} with $c$ and $\a/2$ in place of $\a$ and obtain $\e_1$ and $C\in \mathbb N$.
Let $\e = \min\{\e_1, ((\lfloor 1/c \rfloor+1)c-1)/3\}$.
Let $G$ be a $\lceil c n\rceil$-regular graph of order $n$.
We first pick the set $R$ by Lemma~\ref{lemR}.
Let $G_1=G-R$ and $n_1 = n - |R|$. 
Thus for every vertex $v\in V(G_1)$, we know that $\deg_{G_1}(v)=\lceil c n\rceil - (1\pm\e)c\r n$. 
Since $|R|=(1\pm\e)\r n$ we know that $\deg_{G_1}(v)= (1\pm \e)c n_1$.
Indeed, for the upper bound we have
\[
\deg_{G_1}(v)\le c(n_1+|R|) +1 - (1-\e) c\r n \le c n_1 +1 + 2 \e c \r n \le (1+\e)c n_1,
\]
where in the last inequality we use $n< 2n_1$ and $\r\le 1/4$; and the lower bound can be shown similarly.
By applying Theorem~\ref{thm2} with $\a/2$ in place of $\a$, we obtain a collection of at most $C$ vertex-disjoint paths whose union covers all but at most $\a|V(G_1)|/2\le \a n/2$ vertices of $G_1$.

Next we iteratively use the property of $R$ to connect some pair of paths.
We first explain the general case.
Suppose there are at least $\lfloor 1/c \rfloor+1$ paths left. Indeed, let $v_1,v_2,\dots,v_{\lfloor 1/c \rfloor+1}$ be the (arbitrary one out of the two) ends of the $\lfloor 1/c \rfloor+1$ paths.
Note that throughout the iteration there are at most $C$ vertices in $R$ that have been used for connecting and thus removed from $R$.
So for each $i$, by $\deg(v_i, R)-C\ge (1-\e) c \r n-C\ge (1-2\e) c |R|$, we have
\[
(\lfloor 1/c \rfloor+1)(\deg(v_i, R)-C) \ge (1+3\e) (1-2\e) |R| > |R|,
\]
by the definition of $\e$.
Thus there exist two vertices $v_i, v_j$ which have a common neighbor $w$ in $R$ so that we can connect the corresponding two paths by $w$.
At the end, we obtain a collection of at most $\lfloor 1/c \rfloor$ vertex-disjoint paths whose union covers all but at most $\a n/2 + (1+\e)\r n\le \a n$ vertices in $G$.

Second, assume that there are at least $\lfloor 1/(2c) \rfloor+1$ paths left and in addition that $G$ is bipartite with bipartition $X$ and $Y$.
Note that since $G$ is regular we have $|X|=|Y|=n/2$ (so in particular $n$ must be even).
Fix $\lfloor 1/(2c) \rfloor+1$ paths.
By throwing away at most one vertex from each path we can assume that each path has exactly one end vertex in $X$ and one in $Y$.
Let $v_1,v_2,\dots,v_{\lfloor 1/(2c) \rfloor+1}$ be the end vertices in $X$.
By the similar calculation, we can find a vertex $w\in R\cap Y$ which connects some pair of paths.
At the end, we obtain a collection of at most $\lfloor 1/(2c) \rfloor$ vertex-disjoint paths whose union covers all but at most $\a n/2 + (1+\e)\r n + C^2\le \a n$ vertices in $G$, because the iteration has at most $C$ steps and in each step we threw away at most $C$ vertices from the current paths.
\end{proof}

%

\section*{Acknowledgement}

The author would like to thank Yoshiharu Kohayahawa, Phablo Moura and Yoshiko Wakabayashi for discussions and valuable comments on the manuscript.

\bibliographystyle{plain}
\bibliography{Bibref}

\end{document}